\newtheorem{thm}{Theorem}[section]
\newtheorem{cor}[thm]{Corollary}
\newtheorem{lem}[thm]{Lemma}
\theoremstyle{definition}
\newtheorem{defn}[thm]{Definition}
\theoremstyle{remark}
\numberwithin{equation}{section}
\newcommand{\R}{\mathbb R}
\newcommand{\be}{\begin{equation}}
\newcommand{\ee}{\end{equation}}
\newcommand{\ep}{\eps}
\newcommand{\eps}{\varepsilon}
\newcommand{\p}{\partial}
\newcommand{\comment}[1]{}
\begin{document}

\title{Free boundary regularity for a problem with right hand side}%
\author{D. De Silva}
\address{Department of Mathematics, Barnard College, Columbia University, New York, NY 10027}
\email{\tt  desilva@math.columbia.edu}
\begin{abstract} We consider a one-phase free boundary problem with variable coefficients and non-zero right hand side. We prove that flat free boundaries are $C^{1,\alpha}$ using a different approach than the classical supconvolution method of Caffarelli. We use this result to obtain that Lipschitz free boundaries are $C^{1,\alpha}$.
\end{abstract}
\maketitle

\section{Introduction}

 Consider the following one-phase free boundary problem with variable coefficients and non-zero right hand side,
\begin{equation}\label{fb} \left \{
\begin{array}{ll}
    \sum_{i,j} a_{ij}(x) u_{ij} = f,   & \hbox{in $\Omega^+(u):= \{x \in \Omega : u(x)>0\}$,} \\
\ \\
    |\nabla u|= g, & \hbox{on $F(u):= \partial \Omega^+(u) \cap \Omega,$} \\
\end{array}\right.
\end{equation} with $\Omega$ a bounded domain in $\R^n,$  the coefficients $a_{ij} \in C^{0,\beta}(\Omega),$ $f \in C(\Omega) \cap
L^\infty(\Omega)$, and $g \in C^{0,\beta}(\Omega)$, $g \geq 0.$

In this paper we are concerned with the regularity of the set $F(u)$, that is the so-called free boundary of $u$. There is an extensive
literature on the regularity of the free boundary for this type of problem when $f
\equiv 0$. In the case of the Laplace operator, Caffarelli proved in his pioneer work \cite{C1} that Lipschitz free boundaries are $C^{1,\alpha}$, while in \cite{C2} he showed that ``flat" free boundaries are Lipschitz. The key step of the method in [C1,C2] consists in finding a family of comparison subsolutions using supconvolutions on balls of variable radii.

Higher regularity of the free boundary follows from the classical work of Kinderlehrer and  Nirenberg \cite{KN}.

Regularity results in the spirit of \cite{C1,C2} have been subsequently proved for more general operators. In \cite{W1,W2} Wang considered
concave fully nonlinear uniformly elliptic operators of the form $F(D^2u)$. The work
\cite{C1} was extended by Feldman \cite{F1,F2} to a class on nonconcave fully nonlinear uniformly elliptic operators of the type $F(D^2u, Du)$ and to certain nonisotropic problems. For operators with variable coefficients regularity results are proved in the work of Cerruti, Ferrari,
Salsa \cite{CFS}, and Ferrari, Salsa \cite{FS1, FS2}. Also, Ferrari and then Argiolas, Ferrari in \cite{Fe1, AF} considered a class of fully nonlinear operators of the form $F(D^2u, x)$ with H\"older dependence on $x$.

The results cited above follow the guidelines of \cite{C1,C2}. One purpose of this paper is to provide a different method to obtain that flat free boundaries are $C^{1,\alpha}$. The approach we use is quite flexible since it easily applies to more general nonlinear operators, even degenerate ones, and it also applies to two-phase problems.

In particular, when dealing with operators with variable coefficients we easily obtain that Lipschitz free boundaries are $C^{1,\alpha}.$ In fact our flatness result allows us to use a blow-up argument and reduce the problem to the case of constant coefficients operators.
Our strategy is largely inspired by the work of Savin \cite{S}.

We now state our main results (for the precise
definition of viscosity solutions we refer the reader to Section
2.) We  assume that the matrix $A=(a_{ij}(x))$ is positive definite.


\begin{thm}[Flatness implies $C^{1,\alpha}$]\label{flatmain} Let $u$ be a viscosity solution to \eqref{fb} in
$B_1$. Assume that
$0\in F(u)$, $g(0)=1$ and $a_{ij}(0)=\delta_{ij}$. There exists a universal
constant $\bar\ep>0$ such that, if the graph of $u$ is $\bar\ep$-flat
in $B_1$,i.e. \be\label{hyp1}(x_n-\bar \ep)^+ \leq u(x) \leq
(x_n+\bar \ep)^+, \quad x \in B_1\ee and
\be\label{hypflatmain}[a_{ij}]_{C^{0,\beta}(B_1)} \leq \bar\ep, \quad \|f\|_{L^\infty(B_1)} \leq \bar\ep, \quad [g]_{C^{0,\beta}(B_1)} \leq
\bar \ep,\ee then $F(u)$ is $C^{1,\alpha}$ in $B_{1/2}$.
\end{thm}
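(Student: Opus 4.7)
The plan is to follow the compactness / improvement-of-flatness scheme that Savin introduced for minimal surfaces, adapted to one-phase free boundaries. Regularity is obtained by iterating a single improvement-of-flatness lemma: if $u$ is $\bar\ep$-flat in direction $e_n$ in $B_1$, then in $B_\rho$ it becomes $(\bar\ep\rho/2)$-flat in a new direction $\nu_1$ close to $e_n$, with $\rho$ universal. After rescaling $B_\rho$ to $B_1$ the hypotheses \eqref{hyp1}--\eqref{hypflatmain} are restored at the new scale, so the procedure can be iterated, producing a Cauchy sequence of directions $\nu_k \to \nu_\infty$ with dyadic control on $|\nu_k-\nu_\infty|$; this yields the $C^{1,\alpha}$ modulus of $F(u)$.

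\emph{Step 1: Partial Harnack inequality.} The core technical ingredient is a Harnack-type oscillation decay for flat solutions: if $(x_n+a\bar\ep)^+\leq u \leq (x_n+b\bar\ep)^+$ in a ball with $b-a \leq 1$, then in a smaller ball the strip width $b-a$ shrinks by a universal factor $1-c$. I would argue by a dichotomy based on the value of $u$ at a carefully chosen interior base point near the free boundary: either $u$ lies above the mid-barrier there and a radial subsolution pushes the lower edge up; or it lies below, and a radial supersolution pushes the upper edge down. The barriers need correction terms that absorb the contribution from $f$ and from the oscillation of $a_{ij}$, and the smallness assumptions \eqref{hypflatmain} are used precisely to keep these corrections of lower order relative to the gain $c\bar\ep$.

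\emph{Step 2: Compactness and linearized problem.} Iterating Step 1 in dyadic balls produces uniform Hölder continuity of the normalized slopes $\tilde u_{\bar\ep}(x):=(u(x)-x_n)/\bar\ep$ on $\overline{\Omega^+(u)}$. For any sequence of solutions $u_k$ with $\bar\ep_k \to 0$ verifying the hypotheses, I would extract a locally uniform limit $\tilde u_\infty$; using $a_{ij}(0)=\delta_{ij}$, $g(0)=1$ and $[a_{ij}]_{C^{0,\beta}},\ \|f\|_\infty,\ [g]_{C^{0,\beta}}\to 0$, the limit should solve the linearized Neumann problem
$$\Delta \tilde u_\infty = 0 \ \text{in}\ B_{1/2}^+, \qquad \partial_n \tilde u_\infty = 0 \ \text{on}\ B_{1/2}\cap\{x_n=0\}.$$
Classical boundary regularity (by even reflection) then provides a linear approximation $\tilde u_\infty(x) \approx \nu' \cdot x$ near the origin with $\nu'\cdot e_n=0$ and $|\nu'|$ universally bounded.

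\emph{Step 3: Improvement of flatness and iteration.} A standard contradiction argument against Step 2 yields the improvement-of-flatness lemma stated above. Under the rescaling $u_\rho(x)=\rho^{-1}u(\rho x)$, the Hölder seminorm $[a_{ij}]_{C^{0,\beta}}$ scales by $\rho^\beta$, the sup-norm of the right-hand side by $\rho$, and $[g]_{C^{0,\beta}}$ by $\rho^\beta$; thus \eqref{hypflatmain} is preserved at each iteration and in fact improved geometrically, which is what licenses the iteration without loss. The resulting directions satisfy $|\nu_{k+1}-\nu_k| \lesssim \rho^{k\alpha}$ for some universal $\alpha>0$, translating into $C^{1,\alpha}$ regularity of $F(u)$ at $0$; applying the same reasoning at nearby boundary points gives the conclusion in $B_{1/2}$. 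The main obstacle is Step 1: constructing barriers that simultaneously capture the right-hand side and the variable-coefficient structure while still giving a definite gain $c\bar\ep$ requires a delicate explicit algebraic choice, whereas once Step 1 is in hand Steps 2--3 follow the now-standard Savin template.
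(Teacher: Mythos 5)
Your proposal follows essentially the same route as the paper: a partial Harnack inequality proved with radial comparison barriers (the paper's Theorem 3.1 and Lemma 3.3), compactness of the normalized slopes $(u-x_n)/\ep$ converging to a solution of the half-space Neumann problem (the paper's Corollary 3.2 and Steps 1--2 of Lemma 4.1, together with Lemma 2.6), an improvement-of-flatness lemma by contradiction (Step 3 of Lemma 4.1), and iteration with rescalings $\rho_k=\bar r^k$ under which $\|f\|_\infty$, $\|a_{ij}-\delta_{ij}\|_\infty$, and $\|g-1\|_\infty$ decay geometrically so as to remain within the smallness regime (Section 5). The only cosmetic difference is that you phrase the inheritance of \eqref{hypflatmain} through the scaling of the H\"older seminorms $[a_{ij}]_{C^{0,\beta}}$, $[g]_{C^{0,\beta}}$, whereas the paper converts these into the sup-norm bounds $\|a^k_{ij}-\delta_{ij}\|_\infty\leq\ep_k$, $\|g_k-1\|_\infty\leq\ep_k^2$ required by the Harnack/improvement lemmas; the two formulations are equivalent for this purpose.
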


\begin{thm}[Lipschitz implies $C^{1,\alpha}$]\label{Lipmain} Let $u$ be a viscosity solution to \eqref{fb}.
 Assume that $0\in F(u)$ and $g(0)>0$. If $F(u)$ is a Lipschitz graph in
a neighborhood of $0$, then $F(u)$ is $C^{1,\alpha}$ in a
(smaller) neighborhood of $0$.
\end{thm}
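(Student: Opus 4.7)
The plan is to reduce Theorem \ref{Lipmain} to Theorem \ref{flatmain} by a blow-up argument at $0$. After small-scale rescaling, the operator becomes essentially constant coefficient, the right hand side disappears, and the limiting free boundary problem is the classical one to which Caffarelli's theorem \cite{C1} applies. I first normalize: after replacing $u$ by $u/g(0)$ and performing a linear change of coordinates that sends the matrix $A(0)$ to the identity, I may assume $g(0)=1$ and $a_{ij}(0)=\delta_{ij}$, while $F(u)$ remains a Lipschitz graph through $0$.

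For $r>0$ define
\[
u_r(x):=\tfrac{1}{r}u(rx), \qquad a_{ij}^r(x):=a_{ij}(rx), \qquad f_r(x):=r\,f(rx), \qquad g_r(x):=g(rx),
\]
so that $u_r$ is a viscosity solution of \eqref{fb} with coefficients $a_{ij}^r$, right hand side $f_r$ and free boundary datum $g_r$ on balls of radius $1/r$. The relevant seminorms scale as
\[
[a_{ij}^r]_{C^{0,\beta}(B_1)}=r^\beta[a_{ij}]_{C^{0,\beta}(B_r)}, \quad [g_r]_{C^{0,\beta}(B_1)}=r^\beta[g]_{C^{0,\beta}(B_r)}, \quad \|f_r\|_{L^\infty(B_1)}\le r\|f\|_{L^\infty},
\]
and so all three fall below $\bar\ep$ once $r$ is small. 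The smallness clauses of \eqref{hypflatmain} are therefore automatic at small scales, and the only remaining ingredient needed to invoke Theorem \ref{flatmain} on $u_r$ is the flatness condition \eqref{hyp1}.

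To produce flatness at some small scale, I argue by contradiction. Suppose that $u_{r_k}$ fails to be $\bar\ep$-flat in $B_1$ along some sequence $r_k\downarrow 0$. Since $F(u)$ is a Lipschitz graph through $0$, a standard cone-of-monotonicity argument delivers uniform $C^{0,1}$ bounds and uniform non-degeneracy for the $u_{r_k}$ on compact sets, and the free boundaries $F(u_{r_k})$ are Lipschitz graphs with a common Lipschitz constant and a common axis. Arzel\`a--Ascoli together with Hausdorff convergence of the positivity sets extract a subsequential limit $u_0$; the viscosity structure and the uniform convergence $a_{ij}^{r_k}\to \delta_{ij}$, $f_{r_k}\to 0$, $g_{r_k}\to 1$ ensure that $u_0$ solves the constant-coefficient homogeneous problem
\[
\Delta u_0=0 \text{ in } \{u_0>0\}, \qquad |\nabla u_0|=1 \text{ on } F(u_0),
\]
in $B_1$, with $0\in F(u_0)$ and $F(u_0)$ still a Lipschitz graph. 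By Caffarelli's theorem \cite{C1}, $F(u_0)$ is $C^{1,\alpha}$ near $0$; in particular there exists $\rho>0$ for which the further rescaling $(u_0)_\rho$ is $\bar\ep/2$-flat in $B_1$. Uniform convergence $u_{r_k}\to u_0$, combined with Hausdorff convergence of the free boundaries, forces $(u_{r_k})_\rho$ to be $\bar\ep$-flat in $B_1$ for all large $k$, a contradiction.

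The main obstacle is precisely this compactness step: one must ensure that the viscosity sub/supersolution tests, the non-degeneracy of $u$ from the positive side, and the Lipschitz graph shape of $F(u)$ all survive the rescaling uniformly in $k$, so that the limit $u_0$ inherits the free boundary condition in the strong classical sense required by \cite{C1}. Once flatness is produced at some scale $r_0$, the rescaling $u_{r_0}$ satisfies every hypothesis of Theorem \ref{flatmain}; hence $F(u_{r_0})$, and therefore $F(u)$, is $C^{1,\alpha}$ in a neighborhood of $0$.
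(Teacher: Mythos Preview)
Your argument follows the same blow-up route as the paper: normalize so that $a_{ij}(0)=\delta_{ij}$ and $g(0)=1$, rescale to make the H\"older seminorms and $\|f\|_{L^\infty}$ small, use Lipschitz continuity and non-degeneracy (which the paper supplies in the Appendix via barriers rather than a cone-of-monotonicity argument) to extract a limit $u_0$ solving the homogeneous constant-coefficient problem with Lipschitz free boundary, invoke \cite{C1}, and feed the resulting flatness into Theorem~\ref{flatmain}. Two small differences are worth noting: the paper argues directly rather than by contradiction and, using that the blow-up limit is \emph{global}, draws from \cite{C1} the Liouville-type conclusion $u_0=x_n^+$, which gives $\bar\ep$-flatness of $u_k$ in $B_1$ immediately; your local-$C^{1,\alpha}$-plus-further-rescaling variant is also valid, but your contradiction is not actually closed as written (you produce flatness of $u_{r_k\rho}$, which does not contradict the assumed failure of flatness of $u_{r_k}$), so it is cleaner to drop the contradiction and observe directly that the argument exhibits one small scale at which Theorem~\ref{flatmain} applies.
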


In the theorem above, the size of the neighborhood where $F(u)$ is
$C^{1,\alpha}$ depends on the radius $\rho$ of the ball $B_\rho$
where $F(u)$ is Lipschitz, on the Lipschitz norm of $F(u)$, on
$[a_{ij}]_{C^{0,\beta}(B_\rho)}, \|g\|_{C^{0,\beta}(B_\rho)}$, and  $\|f\|_{L^\infty(B_\rho)}.$

We remark that the assumptions on the coefficients $a_{ij}(x)$ in Theorem \ref{flatmain} can be weakened to a Cordes-Nirenberg type condition:
$$\|a_{ij}-\delta_{ij}\|_{L^\infty(B_1)} \leq \delta(n).$$

As already pointed out, our strategy of the proof of Theorem \ref{flatmain} is inspired by \cite{S}. The main idea is to show that the graph of
$u$ enjoys an ``improvement of flatness" property, that is if the
graph of $u$ oscillates $\ep$ away from a hyperplane in $B_1$, then
in $B_{r_0}$ it oscillates $\ep r_0/2$ away from possibly a
different hyperplane. The key tool in proving this property will be a
Harnack type inequality for solutions to a one-phase free
boundary problem.

The proof of Theorem \ref{Lipmain} will follow via a blow-up
argument from Theorem \ref{flatmain} and the classical
theory in \cite{C1}.

The problem \eqref{fb}, in which a right hand side appears, is not specifically dealt with in any of the previous cited works.
Our interest in this problem arises in connection with the question of the regularity of the free surface which occurs in the classical
hydrodynamical problem for traveling two-dimensional gravity
water-waves with vorticity.
There has been considerable interest
in this problem in recent years, starting with the systematic
study of Constantin and Strauss \cite{CS}.

The physical situation is the following: a traveling wave of an
incompressible, inviscid, heavy fluid moves with constant speed
over an horizontal surface. Since the fluid is incompressible, the
flow can be described by a stream function $u$ which solves the
following free boundary problem (in 2D)
\begin{align*}
&\Delta u = - \gamma(u),  \quad \mbox{in $\Omega:=\{(x,y) \in \R^2 : 0<u(x,y)<B\}$}\\
&u=B,  \quad \mbox{on $y=0$}\\
&|\nabla u|^2+ 2gy=Q,  \quad \mbox{on $S:=\{u=0\}$,}
\end{align*} with $B, g$ fixed constants,$\gamma$ a given vorticity function
and $Q$ a parameter. Of special interest are those free boundaries which are given by the graph of a function $y=\psi(x)$. In the regions where $\psi$ is monotone decreasing (resp. increasing) the free boundary is Lipschitz with respect to the direction $e_1+e_2$ (resp. $e_2-e_1$) and moreover $Q-2gy>0$. As a consequence of Theorem \ref{Lipmain} we obtain that the free boundary is smooth in these regions.

The free boundary is not expected to be smooth at the so-called stagnation points where $Q=2gy$.
At such points, the profile of an irrotational wave ($\gamma \equiv 0$)
has a corner with included angle of $120^\circ$. This was
conjectured by Stokes and it was proved by Amick, Fraenkel, and
Toland \cite{AFT}, and by Plotnikov \cite{P}. The case $\gamma\neq
0$ was investigated by Varvaruca in \cite{V} and recently by Varvaruca and Weiss in \cite{VW}.

 The paper is organized as follows. In
Section 2 we introduce notation and definitions and we prove a regularity result for viscosity solutions to a Neumann problem which we will use in the proof of Theorem \ref{flatmain}. Next, in Section 3, we present the statement of our Harnack
inequality and we exhibit its proof. In Section 4, we state and
prove the ``improvement of flatness" lemma. Finally, in Section 5,
we provide the proof of Theorem \ref{flatmain} and Theorem
\ref{Lipmain}. We conclude the paper with an Appendix in which we prove the standard
Lipschitz continuity and non-degeneracy of solutions to a one-phase free boundary problem.

\section{Preliminaries}

In this section we provide notation and definitions used
throughout the paper. We also present an auxiliary result which
will be used in the proof of our main Theorem \ref{flatmain}.

\smallskip

\noindent \textbf{Notation.} For any continuous
function $u: \Omega \subset \R^n \rightarrow \R$ we denote
$$\Omega^+(u):= \{x \in \Omega : u(x)>0\}, \quad F(u):= \partial \Omega^+(u) \cap \Omega.$$
We refer to the set $F(u)$ as to
the free boundary of $u$, while $\Omega^+(u)$ is its
positive phase (or side).

\smallskip

We now state the definition of viscosity solution to the
problem under consideration, that is\begin{equation}\label{fbnew} \left \{
\begin{array}{ll}
    \sum_{i,j} a_{ij}(x) u_{ij} = f,   & \hbox{in $\Omega^+(u)$} \\
\ \\
    |\nabla u|= g, & \hbox{on $F(u).$} \\
\end{array}\right.
\end{equation} Here $\Omega$ is a bounded
domain in $\R^n,$ $a_{ij} \in C^{0,\beta}(\Omega), f \in C(\Omega) \cap L^\infty(\Omega)$, $g \in
C^{0,\beta}(\Omega)$, and $g \geq 0.$

First we need the following standard notion.

\begin{defn}Given $u, \varphi \in C(\Omega)$, we say that $\varphi$
touches $u$ by below (resp. above) at $x_0 \in \Omega$ if $u(x_0)=
\varphi(x_0),$ and
$$u(x) \geq \varphi(x) \quad (\text{resp. $u(x) \leq
\varphi(x)$}) \quad \text{in a neighborhood $O$ of $x_0$.}$$ If
this inequality is strict in $O \setminus \{x_0\}$, we say that
$\varphi$ touches $u$ strictly by below (resp. above).
\end{defn}

\begin{defn}\label{defnhsol} Let $u$ be a nonnegative continuous function in
$\Omega$. We say that $u$ is a viscosity solution to (\ref{fbnew}) in
$\Omega$, if and only if the following conditions are satisfied:
\begin{enumerate}
\item $ \sum_{i,j} a_{ij}(x) u_{ij} = f$ in $\Omega^+(u)$ in the
viscosity sense, i.e if $\varphi \in C^2(\Omega^+(u))$ touches $u$
by below (resp. above) at $x_0 \in \Omega^+(u)$ then
$$ \sum_{i,j} a_{ij}(x_0) \varphi_{ij}(x_0) \leq  f(x_0) \quad (\text{resp. $ \sum_{i,j} a_{ij}(x_0) \varphi_{ij}(x_0) \geq  f(x_0)$}).$$

\item If $\varphi \in C^2(\Omega)$ and $\varphi^+$ touches $u$ by below (resp.  above) at $x_0 \in F(u)$ and $|\nabla \varphi|(x_0) \neq 0$ then $$|\nabla \varphi|(x_0) \leq g(x_0) \quad (\text{resp. $|\nabla \varphi|(x_0) \geq g(x_0)$}).$$
\end{enumerate}
\end{defn}

Viscosity solutions are introduced so to be able to
use comparison techniques. To this aim, we will need the following
notion of comparison subsolution/supersolution.

\begin{defn}\label{defsub} Let $v \in C^2(\Omega)$.
We say that $v$ is a strict (comparison) subsolution (resp.
supersolution) to (\ref{fbnew}) in $\Omega$, if and only if the
following conditions are satisfied:
\begin{enumerate}
\item $ \sum_{i,j} a_{ij}(x) v_{ij}   > f(x)$ (resp. $< f(x)$) in $\Omega^+(v)$;

\item If $x_0 \in F(v)$, then $$|\nabla v|(x_0) > g(x_0) \quad (\text{resp. $0<|\nabla v|(x_0) < g(x_0)$}).$$

\end{enumerate}
\end{defn}

Notice that, by the implicit function theorem, if $v$ is a strict
subsolution/supersolution then $F(v)$ is a $C^2$ hypersurface.

The following lemma is an immediate consequence of the definitions
above.

\begin{lem}\label{comparison}Let $u,v$ be respectively a solution and a strict
subsolution to $\eqref{fbnew}$ in $\Omega$. If $u \geq v^+$ in
$\Omega$ then $u >v^+$ in $\Omega^+(v) \cup F(v).$
\end{lem}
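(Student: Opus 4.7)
The plan is to argue by contradiction: assume the set $\{u = v^+\} \cap (\Omega^+(v)\cup F(v))$ is nonempty, pick a point $x_0$ in it, and split into the two cases $x_0\in\Omega^+(v)$ or $x_0\in F(v)$. In each case the idea is to use $v$ (respectively $v^+$) as a test function touching $u$ from below, apply the relevant viscosity inequality from Definition \ref{defnhsol}, and contradict the corresponding strict inequality in Definition \ref{defsub}.

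First case: $x_0\in\Omega^+(v)$. Then $v(x_0)>0$, and since $u(x_0)=v^+(x_0)=v(x_0)$, we have $x_0\in\Omega^+(u)$. In a neighborhood of $x_0$ contained in $\Omega^+(v)$ we have $v=v^+\le u$, with equality at $x_0$, so $v\in C^2(\Omega^+(u))$ touches $u$ from below at $x_0$. By item (i) of Definition \ref{defnhsol} this forces
\[
\sum_{i,j} a_{ij}(x_0)v_{ij}(x_0)\le f(x_0),
\]
which directly contradicts item (i) of Definition \ref{defsub}.

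Second case: $x_0\in F(v)$. Then $v(x_0)=0$, so $u(x_0)=v^+(x_0)=0$; moreover arbitrarily close to $x_0$ there are points of $\Omega^+(v)$ where $u\ge v>0$, so $x_0\in\overline{\Omega^+(u)}\setminus\Omega^+(u)$, i.e.\ $x_0\in F(u)$. The function $\varphi:=v\in C^2(\Omega)$ satisfies $\varphi^+=v^+\le u$ in $\Omega$ with equality at $x_0$, so $\varphi^+$ touches $u$ from below at $x_0\in F(u)$. The crucial observation is that $|\nabla\varphi|(x_0)=|\nabla v|(x_0)\neq 0$, which is automatic from item (ii) of Definition \ref{defsub} since $|\nabla v|(x_0)>g(x_0)\ge 0$. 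Hence item (ii) of Definition \ref{defnhsol} applies and yields $|\nabla v|(x_0)\le g(x_0)$, again contradicting the strict subsolution condition.

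Since both cases lead to a contradiction, no such $x_0$ exists and $u>v^+$ on $\Omega^+(v)\cup F(v)$. The statement is essentially a direct unpacking of the definitions; the only subtlety worth flagging is making sure the test-function hypotheses are met in each case, in particular that at a free boundary touching point the gradient of the test function is nonzero, which is supplied for free by the strict subsolution property.
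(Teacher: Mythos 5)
Your proof is correct and supplies exactly the direct argument the paper has in mind when it declares the lemma ``an immediate consequence of the definitions above''; no explicit proof appears in the text. The pointwise contradiction structure, the case split between $\Omega^+(v)$ and $F(v)$, the identification $x_0\in F(u)$ from $u(x_0)=0$ together with nearby points where $u\ge v>0$, and the observation that the strict subsolution condition $|\nabla v|(x_0)>g(x_0)\ge 0$ automatically furnishes the nondegeneracy $|\nabla\varphi|(x_0)\neq 0$ required in Definition~\ref{defnhsol}(ii) are all sound and complete.
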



\noindent \textbf {Notation.} Here and after $B_\rho(x_0) \subset
\R^n$ denotes a ball of radius $\rho$ centered at $x_0$, and
$B_\rho=B_\rho (0)$. A positive constant depending only on the
dimension $n$ is called a universal constant. We often use $c,c_i$
to denote small universal constants, and $C,C_i$ to denote large
universal constants.

\

Our main Theorem \ref{flatmain} will follow from the regularity
properties of solutions to the classical Neumann problem for the
Laplace operator. Precisely, we consider the following boundary
value problem:
\begin{equation}\label{Neumann_p}
  \begin{cases}
    \Delta \tilde u=0 & \text{in $B_\rho \cap \{x_n >0\}$}, \\
\ \\
\tilde u_n=0 & \text{on $B_\rho \cap \{x_n =0\}$}.
  \end{cases}\end{equation}

We use the notion of viscosity solution to \eqref{Neumann_p}. For
completeness (and for lack of references), we recall standard
notions and we prove regularity of viscosity solutions.

\begin{defn}\label{defN} Let $\tilde u$ be a continuous function on $B_\rho \cap \{x_n \geq 0\}.$
We say that $\tilde u$ is a viscosity solution to
\eqref{Neumann_p} if given $P(x)$ a quadratic polynomial touching
$\tilde u$ by below (resp. above) at $\bar x \in B_\rho \cap \{x_n
\geq 0\}$, then

\

(i) if $\bar x \in B_\rho \cap \{x_n >0\}$ then $\Delta P \leq 0,$
(resp. $\Delta P \geq 0$) i.e $\tilde u$ is harmonic in the
viscosity sense;

\

(ii) if $\bar x \in B_\rho \cap \{x_n=0\}$ then $P_n(\bar x) \leq
0$ (resp. $P_n(\bar x) \geq 0$.)

\end{defn}

\textbf{Remark.} Notice that, in the definition above we can
choose polynomials $P$ that touch $\tilde u$ strictly by
below/above (replace $P$ by $P_\eta(x) = P(x) - \eta(x_n - \bar
x_n )^2$ and then let $\eta$ go to 0).

Also, it suffices to verify that (ii) holds for polynomials
$\tilde P$ with $\Delta \tilde P
> 0$. Indeed, let $P$ touch $\tilde u$ by below at $\bar x$. Then,
$$\tilde P = P - \eta(x_n-\bar x_n) + C(\eta)(x_n - \bar x_n)^2$$ touches
$\tilde u$ by below at $\bar x$ (for a sufficiently small constant $\eta>0$ and a
large constant $C>0$ depending on $\eta$) and satisfies
$$\Delta \tilde P
>0, \quad \tilde P_n(\bar x) = P_n(\bar x)-\eta.
$$
If (ii) holds for strictly subharmonic polynomials, we get $\tilde
P_n(\bar x) \leq \eta$ which by letting $\eta$ go to 0 implies $P_n(\bar
x) \leq 0$.

\begin{lem}\label{smooth}Let $\tilde u$ be a viscosity solution to
\eqref{Neumann_p}. Then $\tilde u$ is a classical solution to
\eqref{Neumann_p}. In particular,  $\tilde u \in C^\infty(B_\rho \cap
\{x_n \geq 0\}).$
\end{lem}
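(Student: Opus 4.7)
The plan is to reduce to the classical case by extending $\tilde u$ via even reflection and showing that the reflected function is viscosity harmonic in the full ball. Define
\be
U(x', x_n) := \tilde u(x', |x_n|) \qquad \text{on } B_\rho.
\ee
Once I verify that $U$ is a viscosity solution of $\Delta U = 0$ in $B_\rho$, standard interior regularity for viscosity harmonic functions yields $U \in C^\infty(B_\rho)$, so $\tilde u = U|_{\{x_n \ge 0\}}$ is smooth up to the flat boundary; moreover, evenness of $U$ in $x_n$ forces $\tilde u_n(x',0) \equiv 0$, so the Neumann condition holds classically.

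At any point $\bar x$ with $\bar x_n \neq 0$, viscosity harmonicity of $U$ is immediate from condition (i) for $\tilde u$ together with the symmetry of $U$. The crux is a flat boundary point $\bar x = (\bar x',0)$. Let $P$ be a quadratic polynomial touching $U$ strictly from below there. Averaging with its reflection $P^-(x',x_n):=P(x',-x_n)$ produces
\be
P_e := \tfrac{1}{2}(P + P^-),
\ee
an even polynomial that still touches $U$ from below at $\bar x$ (add the inequalities $U\ge P$ and $U\ge P^-$, the latter from $U(x',x_n)=U(x',-x_n)\ge P(x',-x_n)$), satisfies $(P_e)_n(\bar x)=0$, and has $\Delta P_e(\bar x)=\Delta P(\bar x)$. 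Restricted to $\{x_n\ge 0\}$, $P_e$ touches $\tilde u$ from below at $\bar x$, so it suffices to show $\Delta P_e(\bar x)\le 0$; the touching-from-above case is symmetric.

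Suppose for contradiction that $\Delta P_e(\bar x)>0$. Then $V:=\tilde u-P_e$ is strictly viscosity superharmonic in a half-ball $B_r^+(\bar x):=B_r(\bar x)\cap\{x_n>0\}$, nonnegative there, and (by strict touching together with the strong minimum principle) vanishes only at $\bar x$. Using the classical Hopf barrier $w(x)=e^{-\alpha|x-y_0|^2}-e^{-\alpha\eta^2}$ on an interior sphere $B_\eta(y_0)$ tangent to $\{x_n=0\}$ at $\bar x$, viscosity comparison yields
\be
V(x) \;\ge\; c\,x_n - C|x-\bar x|^2
\ee
for some $c,C>0$ and all $x$ in a small half-neighborhood of $\bar x$. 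Setting
\be
Q(x) := P_e(x) + \tfrac{c}{2}x_n - 2C\,|x'-\bar x'|^2,
\ee
a direct check using this lower bound gives $\tilde u - Q \ge \tfrac{c}{4}x_n + C|x'-\bar x'|^2 \ge 0$ on a small half-ball around $\bar x$, with equality at $\bar x$. Hence $Q$ is a quadratic polynomial touching $\tilde u$ from below at $\bar x$, but $Q_n(\bar x)=c/2>0$, contradicting condition (ii) of Definition \ref{defN}. This establishes that $U$ is viscosity harmonic on $B_\rho$. The delicate step is precisely the passage from the Hopf-type pointwise linear lower bound along the inward normal to the displayed two-variable estimate on $V$: the $-2C|x'-\bar x'|^2$ correction in $Q$ is engineered to absorb the tangential quadratic error in the Hopf barrier, while the $\tfrac{c}{2}x_n$ term preserves a positive normal slope at $\bar x$ and produces the contradiction.
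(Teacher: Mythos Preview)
Your argument is correct and shares the paper's overall plan: extend $\tilde u$ by even reflection to $U$ and show that $U$ is viscosity harmonic in the full ball, the only issue being a test polynomial touching at a point of $\{x_n=0\}$, which you symmetrize to $P_e$ exactly as the paper symmetrizes to $S$. The divergence is in how the boundary case is dispatched. The paper simply perturbs: it sets $S_\ep=S+\ep x_n$ and observes that for small $\ep$ the new contact point $x_\ep$ of $S_\ep$ with $U$ cannot lie on $\{x_n=0\}$ (else $(S_\ep)_n(x'_\ep,0)=\ep>0$ would violate condition~(ii)), so $x_\ep$ is interior and $\Delta P=\Delta S_\ep\le 0$ follows at once. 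You instead argue by contradiction from $\Delta P_e>0$, invoke a Hopf barrier on an interior tangent ball to obtain $V\ge c\,x_n - C|x-\bar x|^2$ (which indeed extends from the annulus to a full half-neighborhood after enlarging $C$, since $V\ge 0$ and the right side is negative in the sliver $\{0<x_n\lesssim |x'-\bar x'|^2\}$), and then engineer a quadratic $Q$ with $Q_n(\bar x)>0$ touching $\tilde u$ from below. Both routes ultimately feed the same positive normal derivative into condition~(ii); the paper's sliding trick is much shorter and avoids the strong minimum principle, the barrier comparison, and the tangential-correction bookkeeping, while your approach makes the Hopf mechanism explicit and would adapt more readily to settings where a linear tilt of the test function is not available.
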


\begin{proof}Let $$u^*(x)=
  \begin{cases}
    \tilde u(x) & \text{if $x \in B_\rho \cap \{x_n \geq 0\}$}, \\
    \tilde u(x', -x_n) & \text{if $x \in B_\rho \cap \{x_n < 0\}$},
  \end{cases}
$$ where $x'=(x_1,\ldots,x_{n-1}).$

We claim that $u^*$ is harmonic (in the viscosity sense), and
hence smooth, in $B_\rho.$ Indeed, let $P$ be a polynomial
touching $u^*$ at $\bar x \in B_\rho$ strictly by below. We need
to show that $\Delta P \leq 0$. Clearly, we only need to consider
the case when $\bar x \in \{x_n=0\}.$

Consider the polynomial $$S(x) =\frac{P(x)+P(x',-x_n)}{2}.$$ Then
\be\label{S} \Delta S=\Delta P, \quad S_n(x',0)=0.  \ee Also, $S$
still touches $u^*$ strictly by below at $\bar x.$ Now, consider
the family of polynomials $$S_\ep= S+ \ep x_n, \ep >0.$$ For $\ep$
small $S_\ep$ will touch $u^*$ by below at some point $x_\ep$.

If $x_\ep$ belongs to $\{x_n =0\}$, since $S_\ep$ touches $\tilde
u$ by below at $x_\ep$ and $\tilde u_n(x',0)=0$ in the viscosity sense,  we obtain that
$$(S_\ep)_n (x'_\ep,0) \leq 0$$ i.e. $$S_n(x'_{\ep},0) + \ep \leq 0 $$
contradicting \eqref{S}.

Thus $x_\ep \in B_\rho \setminus \{x_n=0\}$ and hence $\Delta
S=\Delta P \leq 0$.

In conclusion, $u^*$ is harmonic in $B_\rho$ and our statement
immediately follows.
\end{proof}

\section{A Harnack inequality}

In this section we will prove a Harnack type inequality for a
solution $u$ to our problem 
\begin{equation}\label{fb3} \left \{
\begin{array}{ll}
    \sum_{i,j} a_{ij}(x) u_{ij} = f,   & \hbox{in $\Omega^+(u):= \{x \in \Omega : u(x)>0\}$,} \\
\ \\
    |\nabla u|= g, & \hbox{on $F(u):= \partial \Omega^+(u) \cap \Omega,$} \\
\end{array}\right.
\end{equation} under the assumption ($0<\ep<1$)
\begin{equation}\label{fb_omega2}
    \|f\|_{L^\infty(\Omega)} \leq \ep^2,  \quad
    \|g(x) - 1\|_{L^\infty(\Omega)} \leq  \ep^2, \quad \|a_{ij}-\delta_{ij}\|_{L^\infty(\Omega)} \leq \ep.\end{equation}

This theorem roughly says that
if the graph of $u$ oscillates $\ep r$ away from $x_n^+$ in $B_r$,
then it oscillates $(1-c)\ep r$ in $B_{r/20}$. A corollary of this
theorem will be a key tool in the proof of Theorem \ref{flatmain}.

\begin{thm}[Harnack inequality]\label{HI}There exists a universal constant $\bar
\ep$,  such that if $u$ solves \eqref{fb3}-\eqref{fb_omega2} and it
satisfies at some point $x_0 \in \Omega^+(u) \cup F(u),$

\be\label{osc} (x_n+ a_0)^+ \leq u(x) \leq (x_n+ b_0)^+ \quad
\text{in $B_r(x_0) \subset \Omega,$}\ee
with
$$b_0 - a_0 \leq \ep r, \quad \text{ $\ep \leq \bar \ep$}$$ then
$$ (x_n+ a_1)^+ \leq u(x) \leq (x_n+ b_1)^+ \quad \text{in
$B_{r/20}(x_0)$},$$ with
$$a_0 \leq a_1 \leq b_1 \leq b_0, \quad
b_1 -  a_1\leq (1-c)\ep r, $$ and $0<c<1$ universal.
\end{thm}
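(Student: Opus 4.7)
The plan is to adapt Savin's dichotomy strategy from \cite{S} to the free boundary setting. At a well-chosen point above the (approximate) free boundary, $u$ must be either closer to the upper barrier $(x_n+b_0)^+$ or closer to the lower barrier $(x_n+a_0)^+$. A classical Harnack inequality applied in the positive phase propagates this ``closeness'' to a full interior ball, and a carefully designed strict sub/supersolution of \eqref{fb3} then transports the improvement down to the free boundary via Lemma \ref{comparison}.

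\textbf{Normalization and dichotomy.} After rescaling $u(x) \mapsto r^{-1}u(x_0+rx)$ (which only improves the hypotheses \eqref{fb_omega2}) and translating vertically to set $a_0 = 0$, I may assume $x_0 = 0$, $r=1$, and $b_0 \in [0,\ep]$; the assumption $0 \in F(u)\cup\Omega^+(u)$ then forces $|x_{0,n}|\le \ep$. Pick the test point $p := \tfrac{1}{5} e_n$. In $B_{1/10}(p)$ one has $x_n \ge 1/20$, so $u \ge x_n > 0$ and $u$ solves the linear equation $\sum_{i,j} a_{ij}u_{ij} = f$ there. Split the analysis into
\[
\text{(a)} \quad u(p) \ge p_n + \tfrac{\ep}{2}, \qquad \text{(b)} \quad u(p) \le p_n + \tfrac{\ep}{2}.
\]

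\textbf{Interior Harnack and barrier transport (Case (a)).} Put $w := u - x_n \ge 0$ in $B_{1/10}(p)$. Since $(x_n)_{ij} \equiv 0$, $w$ solves $\sum_{i,j} a_{ij} w_{ij} = f$, and the Krylov--Safonov Harnack inequality yields
\[
\inf_{B_{1/20}(p)} w \;\ge\; c_1 w(p) - C\|f\|_\infty \;\ge\; c_1 \tfrac{\ep}{2} - C\ep^2 \;\ge\; c_2 \ep,
\]
provided $\ep \le \bar\ep$ is small enough. To push this interior gain down to the free boundary I work on the annulus $A := B_{3/4}\setminus\overline{B_{1/40}(p)}$ with a bump of the form $\phi(x) := c_\gamma \bigl(|x-p|^{-\gamma} - (3/4)^{-\gamma}\bigr)$, normalized with $\gamma = \gamma(n)$ large so that $0 \le \phi \le 1$ in $A$, $\phi \equiv 0$ on $\partial B_{3/4}$, $\phi \equiv 1$ on $\partial B_{1/40}(p)$, $\Delta \phi \ge c_0 > 0$ throughout $A$, and the $e_n$-component of $\nabla \phi$ is bounded below by a positive constant in a neighborhood of $A \cap \{x_n = 0\}$. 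Setting $\delta := c_2\ep/2$ and $v(x) := (x_n + \delta \phi(x))^+$, \eqref{fb_omega2} implies that in $\Omega^+(v)$
\[
\sum_{i,j} a_{ij} v_{ij} \;=\; \delta\!\left(\Delta\phi + O(\ep)|D^2\phi|\right) \;>\; \ep^2 \;\ge\; f,
\]
while on $F(v)$ one has $|\nabla v|^2 = 1 + 2\delta (\nabla\phi)_n + O(\delta^2) > 1 + 3\ep^2 \ge g^2$. Hence $v$ is a strict comparison subsolution of \eqref{fb3} in $A$. Moreover $v \le u$ on $\partial A$: on $\partial B_{3/4}$ because $\phi = 0$ and $u \ge x_n^+$, and on $\partial B_{1/40}(p)$ since there $v = x_n + \delta \le x_n + 2\delta \le u$ by the Harnack bound above. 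Lemma \ref{comparison} then gives $v \le u$ in $A$; evaluating at $x \in B_{1/20}(0) \subset A$ yields $u(x) \ge (x_n + c_3\ep)^+$, the required improvement with $a_1 = c_3\ep$ and $b_1 = b_0$.

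\textbf{Case (b) and main obstacle.} Case (b) is symmetric: $w := (x_n + b_0) - u \ge 0$ in the positive phase obeys the analogous Harnack bound, and one constructs a strict \emph{supersolution} $v := (x_n + b_0 - \delta \phi)^+$ with $|\nabla v| < g$ on $F(v)$, yielding $b_1 = b_0 - c_3 \ep$ and $a_1 = a_0$. The main difficulty is concentrated in the barrier step: the single explicit bump $\phi$ must simultaneously deliver a strict interior PDE inequality and a strict gradient inequality on $F(v)$, with gains that dominate the $O(\ep)$ and $O(\ep^2)$ perturbations of $a_{ij}$, $f$ and $g$ from \eqref{fb_omega2}. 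Choosing $\gamma$ large concentrates $|\nabla \phi|$ where it is needed and keeps $\Delta \phi$ coercive throughout $A$, so both inequalities hold once the universal threshold $\bar\ep$ is taken sufficiently small.
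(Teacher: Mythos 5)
Your overall architecture matches the paper's closely: fix the test point $p=\tfrac15 e_n$, split into the two cases according to $u(p)\lessgtr p_n+\tfrac\ep2$, apply interior Harnack to $u-x_n$ on a ball around $p$, and then transport the resulting gain down to the free boundary by means of a radial barrier of the form $c(|x-p|^{-\gamma}-(3/4)^{-\gamma})$ on an annulus (the paper isolates this content in Lemma~\ref{main}). However, the crucial step --- deducing $v\le u$ in the annulus $A$ from $v\le u$ on $\partial A$ --- is not justified. You appeal to Lemma~\ref{comparison}, but that lemma is not a boundary-to-interior comparison principle: it \emph{assumes} $u\ge v^+$ throughout the domain and only upgrades the non-strict inequality to a strict one on $\Omega^+(v)\cup F(v)$. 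Invoking it here begs the question. The paper closes exactly this gap with a sliding (continuity) argument: it starts from $v_0=p+c_0\ep(w-1)\le p\le u$, which holds in the whole annulus by construction, slides the family $v_t=v_0+t$ upward to the largest $\bar t$ with $v_{\bar t}\le u$, and then rules out the touching point lying on $\partial B_{3/4}(\bar x)$ or in the open annulus (this last exclusion is where Lemma~\ref{comparison} is actually used), forcing the touch onto $\overline B_{1/20}(\bar x)$, which contradicts the Harnack bound. Your proposal needs this sliding step made explicit; without it the barrier transport is unfounded.

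A second, smaller issue is the normalization. You translate to set $a_0=0$ and then assert that $0\in F(u)\cup\Omega^+(u)$ ``forces $|x_{0,n}|\le\ep$,'' but after the translation the distinguished point is no longer the origin, and in any case there is no a priori bound $|a_0|\le\ep$; one only gets $a_0\ge -\ep$ from $b_0\ge 0$, and $a_0$ can be as large as $O(1)$. The paper instead works with $p(x)=x_n+a_0$ without normalizing, and treats three cases at the level of Theorem~\ref{HI}: if $|a_0|<1/10$ Lemma~\ref{main} applies directly; if $a_0<-1/10$ the origin would sit in the interior of the zero set (contradiction); if $a_0>1/10$ the ball $B_{1/10}$ lies in the positive phase and classical interior Harnack finishes. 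You should keep this case split rather than attempt the vertical normalization, which distorts the geometry. Finally, a typo: with the bump $\phi(x)=c_\gamma\bigl(|x-p|^{-\gamma}-(3/4)^{-\gamma}\bigr)$ the annulus must be $B_{3/4}(p)\setminus\overline{B_{1/40}(p)}$ (both centered at $p$), not $B_{3/4}\setminus\overline{B_{1/40}(p)}$, for $\phi$ to vanish on the outer boundary.
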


From this statement we immediately get the desired corollary to be
used in the proof of our main result. Precisely, if $u$ satisfies
\eqref{osc} with $r=1$, then we can apply Harnack inequality
repeatedly and obtain $$\label{osc2} (x_n+ a_m)^+ \leq u(x)
\leq (x_n+ b_m)^+ \quad \text{in $B_{20^{-m}}(x_0)$}, $$with
$$b_m-a_m \leq (1-c)^m\ep$$ for all $m$'s such that $$(1-c)^m 20^{m}\ep \leq \bar
\ep.$$ This implies that for all such $m$'s, the oscillation of
the function
$$\tilde u_\ep(x) = \frac{u(x) - x_n }{\ep}$$ in
$(\Omega^+(u) \cup F(u)) \cap B_{r}(x_0), r=20^{-m}$ is less than $(1-c)^m= 20^{-\gamma m} =
r^\gamma$. Thus, the following corollary holds.

\begin{cor} \label{corollary}Let $u$ be a solution to \eqref{fb3}-\eqref{fb_omega2}
satisfying \eqref{osc} for $r=1$. Then  in $B_1(x_0)$ $\tilde
u_\ep$ has a H\"older modulus of continuity at $x_0$, outside
the ball of radius $\ep/\bar \ep,$ i.e for all $x \in (\Omega^+(u) \cup F(u)) \cap B_1(x_0)$, with $|x-x_0| \geq \ep/\bar\ep$
$$|\tilde u_\ep(x) - \tilde u_\ep (x_0)| \leq C |x-x_0|^\gamma.
$$
\end{cor}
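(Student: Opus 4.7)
The plan is to iterate Theorem \ref{HI} repeatedly. Starting from the hypothesis \eqref{osc} at $r=1$ with oscillation $b_0-a_0 \leq \ep$, I inductively construct sequences $\{a_m\}, \{b_m\}$ with $a_{m-1} \leq a_m \leq b_m \leq b_{m-1}$ and
$$(x_n+a_m)^+ \leq u(x) \leq (x_n+b_m)^+ \quad \text{in } B_{20^{-m}}(x_0), \qquad b_m - a_m \leq (1-c)^m \ep.$$
To pass from step $m$ to step $m+1$ I must check the Harnack hypothesis at radius $r=20^{-m}$, namely $(b_m - a_m)/r = (20(1-c))^m \ep \leq \bar\ep$. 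Let $M$ denote the largest integer for which this holds; the iteration is valid for every $m \in \{0,1,\dots,M\}$.

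Next I translate the flatness of $u$ into an oscillation estimate for $\tilde u_\ep$. Fix $m \leq M$ and $x \in (\Omega^+(u) \cup F(u)) \cap B_{20^{-m}}(x_0)$. If $x \in \Omega^+(u)$, the upper flatness gives $x_n + b_m \geq u(x) > 0$, so $u(x) - x_n \leq b_m$; for $x \in F(u)$ the same conclusion $-x_n \leq b_m$ follows by approximation from $\Omega^+(u)$. The lower flatness, combined with $u(x) \geq 0$, gives $u(x)-x_n \geq a_m$ in both cases (when $x_n+a_m \geq 0$ directly, and otherwise from $u(x) \geq 0 > x_n+a_m$). Dividing by $\ep$,
$$\frac{a_m}{\ep} \leq \tilde u_\ep(x) \leq \frac{b_m}{\ep} \quad \text{on } (\Omega^+(u)\cup F(u)) \cap B_{20^{-m}}(x_0),$$
hence the oscillation of $\tilde u_\ep$ on this set is at most $(1-c)^m$.

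Finally I convert geometric decay of oscillations into a Hölder modulus. Choose the universal exponent $\gamma := -\log(1-c)/\log 20 \in (0,1)$, so that $(1-c)^m = 20^{-\gamma m}$. Given $x \in (\Omega^+(u)\cup F(u)) \cap B_1(x_0)$ with $|x-x_0| \geq \ep/\bar\ep$, pick $m \geq 0$ so that $20^{-(m+1)} \leq |x-x_0| < 20^{-m}$. The lower bound on $|x-x_0|$ forces $20^m \leq \bar\ep/\ep$, which in turn gives $(20(1-c))^m \ep \leq \bar\ep$, so $m \leq M$ and the oscillation estimate of the previous step applies. Therefore
$$|\tilde u_\ep(x) - \tilde u_\ep(x_0)| \leq (1-c)^m = 20^{-\gamma m} \leq (20|x-x_0|)^\gamma = C|x-x_0|^\gamma,$$
with $C = 20^\gamma$ universal.

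The only real obstacles are two bookkeeping points, both already visible in the excerpt: verifying that the one-sided flatness $u \leq (x_n+b_m)^+$ translates into the two-sided bound $a_m \leq u - x_n \leq b_m$ on the entire set $(\Omega^+(u) \cup F(u)) \cap B_{20^{-m}}(x_0)$ (the $F(u)$ case needing a limit from the positive side), and confirming that the cutoff radius $\ep/\bar\ep$ in the corollary is large enough to keep $m \leq M$ throughout the argument. Both are immediate from the statement of Theorem \ref{HI}.
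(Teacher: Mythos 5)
Your proof is correct and follows essentially the same route the paper takes: iterate Theorem \ref{HI} to get geometric decay of the oscillation bands $[a_m,b_m]$ on balls $B_{20^{-m}}(x_0)$ as long as $(20(1-c))^m\ep\le\bar\ep$, translate this into the oscillation of $\tilde u_\ep$ on $(\Omega^+(u)\cup F(u))\cap B_{20^{-m}}(x_0)$, then choose $\gamma$ with $1-c=20^{-\gamma}$ and pick the dyadic scale matching $|x-x_0|$. The paper leaves the bookkeeping you spell out (the translation from one-sided flatness to two-sided bounds on $u-x_n$, and the check that $|x-x_0|\ge\ep/\bar\ep$ keeps $m$ in the valid range) implicit, but the argument is the same.
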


The proof of the Harnack inequality relies on the following lemma.

\begin{lem}\label{main}There exists
a universal constant $\bar \ep>0$ such that if $u$ is a solution
to \eqref{fb3}-\eqref{fb_omega2} in $B_1$ with  $0< \ep \leq \bar \ep$
and $u$ satisfies \be\label{control} p(x)^+ \leq u(x) \leq (p(x) +\ep)^+
\quad \text{$x \in B_1$, $p(x)=x_n + \sigma,$ $|\sigma| <
1/10$}\ee then if at $\bar x=\dfrac{1}{5}e_n$ \be\label{u-p>ep2}
u(\bar x) \geq (p(\bar x)+\frac{\ep}{2})^+, \ee then \be u \geq
(p+c\ep)^+ \quad \text{in $\overline{B}_{1/2},$}\ee for some
$0<c<1.$ Analogously, if $$ u(\bar x) \leq (p(\bar x
)+\frac{\ep}{2})^+,$$ then $$ u \leq (p+(1-c)\ep)^+ \quad
\text{in $\overline{B}_{1/2}.$}$$
\end{lem}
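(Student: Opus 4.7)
The plan combines an interior Harnack estimate for $u - p$ near $\bar x$ with a barrier/sliding argument that propagates the gain to $\bar B_{1/2}$. I describe the lower bound; the upper bound follows by the symmetric argument with supersolutions in place of subsolutions.

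\emph{Step 1 (interior Harnack).} Since $|\sigma|<1/10$, $p(\bar x)\geq 1/10$, and the hypothesis $u\geq p^+$ forces $u\geq p > 1/20$ on a fixed ball $B_{\rho_0}(\bar x) \subset \Omega^+(u)$ for a universal $\rho_0 > 0$. The nonnegative function $w := u - p$ solves $\sum a_{ij} w_{ij} = f$ there (as $p$ is linear), with $|f| \leq \ep^2$, $w \leq \ep$ and $w(\bar x) \geq \ep/2$. Since $(a_{ij})$ is uniformly elliptic for $\ep$ small, the classical interior Harnack inequality for $L$ yields
\[
\inf_{\bar B_{\rho_0/2}(\bar x)} w \geq c_0 \ep,
\]
with $c_0 \in (0,1)$ universal (the $O(\ep^2)$-error is absorbed once $\ep\leq\bar\ep$ is small).

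\emph{Step 2 (barrier and subsolution).} Build a $C^2$ radial barrier $\varphi$ on an annular region $A\subset\subset B_{3/4}$ containing $B_{1/2}\setminus\bar B_{\rho_0/2}(\bar x)$, with universal constants $\delta_1, \delta_2, c' > 0$ such that
\[
\varphi \leq 1 \text{ on } \partial B_{\rho_0/2}(\bar x), \quad \varphi \leq 0 \text{ on } \partial A\setminus\partial B_{\rho_0/2}(\bar x), \quad \varphi \geq c' \text{ on } \bar A \cap \bar B_{1/2},
\]
\[
\Delta\varphi \geq 4\delta_1 \text{ on } \bar A, \quad \varphi_n \geq \delta_2 \text{ wherever the free boundary of } v := p+c_0\ep\varphi \text{ meets } \bar A.
\]
A typical choice is $\varphi(x) = A_0(|x-y_0|^{-\gamma} - R^{-\gamma})$ with $\gamma > n-2$, $y_0 \in B_{3/4}$ placed above $\bar x$ by a definite amount, and $A := \{\varphi>0\}\setminus\bar B_{\rho_0/2}(\bar x)$; since $F(v)$ lies near $\{x_n = -\sigma\} \subset \{|x_n|<1/10\}$, putting $y_0$ above this strip gives $\varphi_n > 0$ there. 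Using \eqref{fb_omega2} and $\ep$ small, $v$ is a strict $C^2$ subsolution of \eqref{fb3} in the sense of Definition \ref{defsub}: in $\Omega^+(v)$,
\[
L v = c_0\ep\bigl(\Delta\varphi + (a_{ij}-\delta_{ij})\varphi_{ij}\bigr) \geq 2c_0\ep\delta_1 > \ep^2 \geq f,
\]
and on $F(v)$,
\[
|\nabla v|^2 = 1 + 2c_0\ep\varphi_n + c_0^2\ep^2|\nabla\varphi|^2 \geq 1 + c_0\ep\delta_2 > 1 + C\ep^2 \geq g^2.
\]

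\emph{Step 3 (sliding, conclusion, and main obstacle).} Set $s^* := \inf\{s \geq 0 : (v - s)^+ \leq u \text{ on } \bar A\}$; this is finite by boundedness of $v$. If $s^* > 0$, continuity produces $x^* \in \bar A$ with $(v - s^*)^+(x^*) = u(x^*)$ and $v(x^*) \geq s^*$. Boundary analysis excludes $x^* \in \partial A$: on $\partial B_{\rho_0/2}(\bar x)$, Step 1 yields $v - s^* \leq p + c_0\ep - s^* < u$; on $\partial A \setminus \partial B_{\rho_0/2}(\bar x)$, $\varphi \leq 0$ forces $v \leq p$, and then in the subcase $v(x^*) > s^*$ we have $u(x^*) = v(x^*) - s^* < p(x^*) \leq u(x^*)$, while in the subcase $v(x^*) = s^* > 0$ we have $p(x^*) \geq s^* > 0$ and so $u(x^*) \geq p(x^*) > 0$, contradicting $u(x^*) = 0$. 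So $x^*$ is interior to $A$. Since translation preserves Step 2's strict inequalities, $v - s^*$ is still a strict subsolution on $\bar A$: if $v(x^*) > s^*$, $v - s^* \in C^2$ touches $u$ from below at $x^* \in \Omega^+(u)$, contradicting Definition \ref{defnhsol}(i) via $L(v - s^*) > f$; if $v(x^*) = s^*$, then $x^* \in F(v - s^*) \cap F(u)$ and Definition \ref{defnhsol}(ii) gives $|\nabla v|(x^*) \leq g(x^*)$, contradicting Step 2. Hence $s^* = 0$ and $v^+ \leq u$ on $\bar A$. With Step 1 providing $u \geq p + c_0\ep \geq v$ on $\bar B_{\rho_0/2}(\bar x)$, we get $v^+ \leq u$ on $\bar B_{3/4}$, and $\varphi \geq c'$ on $\bar B_{1/2}$ yields $u \geq (p + c\ep)^+$ there with $c := c_0 c'$. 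The delicate step throughout is Step 2: both $\Delta\varphi$ and $\varphi_n$ must be bounded below by order-one universal constants so that the $c_0\ep$-multiple produces a first-order perturbation dominating the $\ep^2$-errors from $f$, $g - 1$, and $a_{ij}-\delta_{ij}$; realizing $\varphi_n > 0$ (for the free-boundary condition) and $\Delta\varphi > 0$ (for the PDE condition), while keeping $\varphi \leq 0$ on the outer boundary of $A$, is the main geometric constraint.
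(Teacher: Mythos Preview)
Your proposal is correct and follows essentially the same strategy as the paper: interior Harnack for $u-p$ on a small ball around $\bar x$, a radial power-type barrier $\varphi$ yielding a strict comparison subsolution $v=p+c_0\ep\,\varphi$, and a sliding argument to rule out interior and free-boundary touching via Definition~\ref{defnhsol}. The only cosmetic difference is that the paper centers the barrier exactly at $\bar x$ and slides upward from $v_0=p+c_0\ep(w-1)\le p\le u$, whereas you slide downward; one small imprecision in your write-up is the claim that ``translation preserves Step~2's strict inequalities'' for the free-boundary condition---the set $F(v-s^*)=\{v=s^*\}$ moves with $s^*$, so you need $\varphi_n\ge\delta_2$ there and not just on $F(v)$. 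This is easily repaired: since $\varphi\le 1$ on $\bar A$ (with $y_0$ inside the inner ball) one has $s^*\le c_0\ep$, hence $\{v=s^*\}\cap A\subset\{p\le s^*\}\subset\{x_n<3/20\}$, which lies below $y_0$ and gives $\varphi_n>0$ exactly as in the paper.
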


\begin{proof} We prove the first statement.
Clearly, from \eqref{control} \be\label{fullcontrol}u \geq p \quad
\text{in $B_1.$}\ee

Let $$w=c(|x-\bar x|^{-\gamma} - (3/4)^{-\gamma})$$ be defined in the closure of the annulus
$$A:=  B_{3/4}(\bar x) \setminus \overline{B}_{1/20}(\bar x).$$
The constant $c$ is such that
$w$ satisfies the
boundary conditions
  $$\begin{cases}
    w =0 & \text{on $\p B_{3/4}(\bar x)$}, \\
    w=1 & \text{on $\p B_{1/20}(\bar x)$}.
  \end{cases} $$ Also, since $\|a_{ij}-\delta_{ij}\|_{L^\infty(B_1)} \leq \ep$ the matrix $A=a_{ij}$ is uniformly elliptic and we can choose the constant $\gamma$ universal so that
$$\sum_{ij} a_{ij}(x)w_{ij} \geq \delta>0 \quad \text{in $A$},$$ with $\delta$ universal.
Extend $w$ to be equal
to 1 on $B_{1/20}(\bar x).$

Notice that since $|\sigma| < 1/10$ using \eqref{fullcontrol} we get
\be\label{inclusion1} B_{1/10}(\bar x) \subset B_1^+(u). \ee Also,
$$ B_{1/2} \subset \subset B_{3/4}(\bar x) \subset \subset B_1. $$

Since in view of \eqref{fullcontrol}-\eqref{inclusion1}, $u-p \geq
0$ and solves a uniformly elliptic equation in $B_{1/10}(\bar x)$ with right-hand side $f$, we can apply Harnack inequality
to obtain
\be\label{HInew} u(x)
- p(x) \geq c(u(\bar x)- p(\bar x)) - C \|f\|_{L^\infty} \quad \text{in $\overline B_{1/20}(\bar x)$}. \ee
From \eqref{u-p>ep2} and the first inequality in \eqref{fb_omega2} we conclude that (for $\ep$ small enough)
\be\label{u-p>cep} u
- p \geq c\ep - C \ep^2 \geq c_0\ep \quad \text{in $\overline B_{1/20}(\bar x)$}. \ee
Now set

\be\label{v} v(x)= p(x)+ c_0\ep (w(x)-1), \quad x \in \overline B_{3/4}(\bar x),\ee and for $t
\geq 0,$
$$v_t(x)= v(x)+t, \quad x \in \overline B_{3/4}(\bar x).
$$ Notice that,
$$\sum_{ij} a_{ij}(x) (v_t)_{ij} \geq c_0\delta \ep > \ep^2 \quad \text{in $A$.}$$
According to \eqref{fullcontrol} and the definition of $v_t$ we have,$$\label{v<u} v_0(x)=v(x) \leq
p(x) \leq u(x) \quad x \in \overline B_{3/4}(\bar x).$$

Let $\bar t$ be the largest $t \geq 0$ such that
$$v_{t}(x) \leq u(x) \quad \text{in $\overline B_{3/4}(\bar x)$}.$$

We want to show that $\bar t \geq c_0\ep.$ Then, using the
definition \eqref{v} of $v(x)$  we get
$$u(x) \geq v(x) + \bar t \geq p(x) + c_0\ep w(x)$$
and hence, since  on $\overline B_{1/2} \subset B_{3/4}(\bar x)$
one has $w(x) \geq c_2$ for some universal constant $c_2$, we
obtain that $$ u(x)-p(x) \geq c \ep  \quad
\text{on $\overline B_{1/2}$}$$ as desired.

Suppose $\bar t < c_0\ep$. Then at some
$\tilde x \in \overline B_{3/4}(\bar x)$ we have $$v_{\bar t}(\tilde x) =
u(\tilde x).$$ We show that such touching point can only occur on $\overline B_{1/20}(\bar x).$
Indeed, since $w\equiv 0$ on $\p B_{3/4}(\bar x)$ from the definition of $v_t$ we get

$$v_{\bar t}(x) = p(x) -c_0\ep +\bar t \quad \textrm{on  $\p B_{3/4}(\bar x)$}.$$

\noindent Using that $\bar t< c_0\ep$ together with the fact that $u \geq p$ we then obtain

$$v_{\bar t} < u \quad \textrm{on  $\p B_{3/4}(\bar x)$}.$$

We now show that $\tilde x$ cannot belong to the annulus $A$.
As already observed, $$\label{laplacev} \sum_{ij} a_{ij}(x)(v_{\bar t})_{ij} > \ep^2, \quad \textrm{in
$A$}$$ and also
\be\label{boundongradv} |\nabla v_{\bar t}| \geq |v_n| = |1+
c_0\ep w_n|, \quad \textrm{in $A$}.\ee


We claim that $$w_n(x) \geq c_1 \quad \text{on $\{v_{\bar t} \leq
0\} \cap A$},
$$ for a universal constant $c_1.$

Indeed, since $w$ is radially symmetric, $$ \label{w_n}w_n(x)=
|\nabla w(x)|\nu_x \cdot e_n, \quad \text{$x \in A$ }$$ where
$\nu_x$ is the unit direction of $x-\bar x$. Clearly from the formula for $w$ we get that $|\nabla w|>c$ on $A.$ Also, $\nu_x \cdot
e_n$ is bounded below in the region $\{v_{\bar t} \leq 0\} \cap
A,$ since for $\ep$ small enough $$ \{v_{\bar t} \leq 0\} \cap A
\subset \{p \leq c_0\ep\}= \{x_n \leq -\sigma + c_0\ep\} \subset \{x_n
<3/20\},
$$ and $\bar x = 1/5e_n.$


Hence, from \eqref{boundongradv} we deduce that $$ |\nabla v_{\bar t}|
\geq 1 + c_2 \ep, \quad \text{on $\{v_{\bar t}\leq 0\}\cap A.$}$$
In particular, for $\ep$ small enough and in view of the second inequality in \eqref{fb_omega2},
$$|\nabla v_{\bar t}|(x)  >
1+ \ep^2 \geq g(x) \quad \text{for  $x \in A \cap F(v_{\bar t})$}.$$ Thus,
$v_{\bar t}$ is a strict subsolution to $\eqref{fb3}$ in $A$
and according to Lemma \ref{comparison} since $u$ solves
\eqref{fb3} in $B_1$, $\tilde x$ cannot belong to $A.$
Therefore, $\tilde x \in \overline B_{1/20}(\bar x)$ and
$$u(\tilde x)=v_{\bar t}(\tilde x) \leq p(\tilde x)+\bar t < p(\tilde
x)+c_0\ep,$$ which implies
$$u(\tilde x) - p(\tilde x) < c_0\ep$$ contradicting \eqref{u-p>cep}.

The proof of the second statement follows from a similar argument.
\end{proof}

We are now ready to give the proof of the Harnack inequality.

\

\textit{Proof of Theorem \ref{HI}.} Assume without loss of
generality,
$$x_0=0, \quad r=1.$$
 According to \eqref{osc}, $$\label{osc1} p(x)^+ \leq u(x) \leq
(p(x)+\ep)^+, \quad \text{in $B_1,$}$$ with $p(x)=x_n+ a_0$. If
$|a_0| < 1/10$ then we can apply the previous Lemma \ref{main} and the
desired statement immediately follows.

Suppose not. If $a_0 < -1/10$, then (for $\ep$ small) 0 belongs to the zero phase of
$(p(x)+\ep)^+$ which implies that 0 also belongs to the zero phase
of $u$, a contradiction.

If $a_0 >1/10$ then $B_{1/10} \subset B^+_1(u)$, and the
conclusion follows by the classical Harnack inequality
in $B_{1/10}$ as long as $\ep$ is small enough.\qed

\section{Improvement of flatness}


In this section we present the main ``improvement of flatness" lemma, from which the proof of
Theorem \ref{flatmain} will easily follow via an iterative
argument.

\begin{lem}[Improvement of flatness] \label{improv}Let $u$ be a
solution to \eqref{fb3}-\eqref{fb_omega2} in $B_1$ satisfying
\begin{equation}\label{flat}(x_n -\ep)^+ \leq u(x) \leq (x_n +
\ep)^+ \quad \text{for $x \in B_1,$}
\end{equation} with $ 0 \in F(u).$

If $0<r \leq r_0$ for $r_0$ a
universal constant and $0<\ep \leq \ep_0$ for some $\ep_0$
depending on $r$, then

\begin{equation}\label{improvedflat_2}(x \cdot \nu -r\frac{\ep}{2})^+  \leq u(x) \leq
(x \cdot \nu +r\frac{\ep }{2})^+ \quad \text{for $x \in B_r,$}
\end{equation}with $|\nu|=1,$ and $ |\nu - e_n| \leq C\ep^2$ for a
universal constant $C.$

\end{lem}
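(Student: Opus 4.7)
The plan is a compactness-and-contradiction argument in the spirit of Savin's method. Suppose the conclusion fails: there exist a fixed $r \in (0, r_0]$, a sequence $\ep_k \downarrow 0$, and solutions $u_k$ of \eqref{fb3}-\eqref{fb_omega2} with $\ep = \ep_k$ satisfying flatness \eqref{flat} with $\ep = \ep_k$, yet none satisfying \eqref{improvedflat_2}. Introduce the rescaled height functions
$$\tilde u_k(x) = \frac{u_k(x) - x_n}{\ep_k}, \qquad x \in \bigl(B_1^+(u_k) \cup F(u_k)\bigr),$$
which take values in $[-1,1]$ by \eqref{flat}. By Corollary \ref{corollary}, applied on a covering of $\overline{B}_{1/2}$, the family $\{\tilde u_k\}$ is equi-H\"older outside an obstruction set of radius $\ep_k/\bar\ep \to 0$; the standard Savin graph-convergence argument then yields a subsequence converging uniformly to a H\"older-continuous limit $\tilde u_\infty$ on $\overline{B}_{1/2} \cap \{x_n \ge 0\}$, with $\tilde u_\infty(0) = 0$ (since $0 \in F(u_k)$ forces $\tilde u_k(0) = 0$) and $F(u_k) \to B_{1/2} \cap \{x_n = 0\}$ in Hausdorff distance.

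The central step is to identify $\tilde u_\infty$ as a viscosity solution of the Neumann problem \eqref{Neumann_p}. For the interior condition, if a quadratic polynomial $P$ touches $\tilde u_\infty$ strictly from below at $\bar x$ with $\bar x_n > 0$, then by uniform convergence $P + s_k$ touches $\tilde u_k$ from below at some $x_k \to \bar x$ with $s_k \to 0$, so $\varphi_k(x) = x_n + \ep_k(P(x) + s_k)$ touches $u_k$ from below at $x_k$ in the positive phase. Definition \ref{defnhsol}(1) yields $\ep_k \sum a_{ij}(x_k) P_{ij}(x_k) \le f(x_k) \le \ep_k^2$, and dividing by $\ep_k$ and using $a_{ij}(x_k) \to \delta_{ij}$ gives $\Delta P(\bar x) \le 0$. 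For the boundary condition at $\bar x \in \{x_n = 0\}$, the remark following Definition \ref{defN} reduces matters to polynomials with $\Delta P > 0$; the same sliding argument produces a touching point $x_k$ for $\varphi_k$, but $x_k \in B_1^+(u_k)$ would contradict the interior inequality just derived (for $k$ large), so $x_k \in F(u_k)$. Applying Definition \ref{defnhsol}(2) to $\varphi_k^+$ yields $|\nabla \varphi_k|(x_k) \le g(x_k) \le 1 + \ep_k^2$; expanding $|\nabla \varphi_k|(x_k) = 1 + \ep_k P_n(x_k) + O(\ep_k^2)$ and dividing by $\ep_k$ gives $P_n(\bar x) \le 0$ in the limit.

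With $\tilde u_\infty$ so identified, Lemma \ref{smooth} gives $\tilde u_\infty \in C^\infty$. Taylor expanding at the origin and using $\tilde u_\infty(0) = 0$ together with $\partial_n \tilde u_\infty(0) = 0$ from the Neumann condition, we obtain $\tilde u_\infty(x) = \xi' \cdot x' + O(|x|^2)$ for some tangential vector $\xi'$ of universally bounded size. Choosing $r_0$ universally small so that the quadratic remainder is at most $r/4$ on $B_r$, and setting $\nu_k = (e_n + \ep_k \xi')/|e_n + \ep_k \xi'|$, the uniform convergence $\tilde u_k \to \tilde u_\infty$ transfers back to the improved flatness \eqref{improvedflat_2} for $u_k$ in $B_r$ with $\nu_k$ close to $e_n$, contradicting the choice of sequence.

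The main obstacle is the boundary case in identifying $\tilde u_\infty$ as a Neumann solution: one must arrange the perturbation so that the touching point for $u_k$ necessarily lies on $F(u_k)$ rather than in $B_1^+(u_k)$ (which the strict sign $\Delta P > 0$ makes possible), and carefully track the powers of $\ep_k$ in the smallness hypotheses \eqref{fb_omega2} so that only the linearized contribution survives after dividing by $\ep_k$. Once this is in place, the rest of the argument is a routine combination of compactness, Taylor expansion, and elementary rescaling of the free boundary normal.
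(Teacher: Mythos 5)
Your proposal is correct and follows essentially the same compactness–contradiction route as the paper: rescaling to $\tilde u_k = (u_k - x_n)/\ep_k$, invoking Corollary \ref{corollary} for equi-H\"older bounds and graph-Hausdorff convergence, identifying the limit as a viscosity solution of the Neumann problem \eqref{Neumann_p} via the interior and free-boundary touching arguments (including the reduction to strictly subharmonic test polynomials), and then transferring the Taylor expansion of the smooth limit back to the $u_k$. The steps you flag as the ``main obstacle'' are precisely the ones the paper handles, and in the same way.
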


\begin{proof}We divide the proof of this Lemma into 3 steps. We
use the following notation:
$$\Omega_\rho(u):= (B_1^+(u) \cup F(u)) \cap B_\rho.$$

 \

\textbf{Step 1 -- Compactness.} Fix $r \leq r_0$ with $r_0$ universal (the precise $r_0$ will be given in Step 3). Assume by contradiction that we
can find a sequence $\ep_k \rightarrow 0$ and a sequence $u_k$ of
solutions to \eqref{fb3} in $B_1$ with coefficients $a_{ij}^k$, right hand side $f_k$ and free boundary condition $g_k$ satisfying \eqref{fb_omega2}, such that $u_k$ satisfies \eqref{flat},
i.e.
\begin{equation}\label{flat_k}(x_n -\ep_k)^+ \leq u_k(x) \leq (x_n +
\ep_k)^+ \quad \text{for $x \in B_1$,  $0 \in F(u_k),$}
\end{equation}
but it does not satisfy the conclusion \eqref{improvedflat_2} of the lemma.

Set,$$ \tilde{u}_{k}(x)= \frac{u_k(x) - x_n}{\ep_k}, \quad x \in
\Omega_1(u_k).
$$
Then \eqref{flat_k} gives,
\begin{equation}\label{flat_tilde} -1 \leq \tilde{u}_{k}(x) \leq 1
\quad \text{for $x \in \Omega_1(u_k)$}.
\end{equation}

From Corollary \ref{corollary}, it follows that the function
$\tilde u_{k}$ satisfies \be\label{HC}|\tilde u_{k}(x) - \tilde
u_{k} (y)| \leq C |x-y|^\gamma,\ee for $C$ universal and
$$|x-y| \geq \ep_k/\bar\ep, \quad x,y \in \Omega_{1/2}(u_k).$$ From \eqref{flat_k} it clearly follows that
$F(u_k)$ converges to $B_1 \cap \{x_n=0\}$ in the Hausdorff
distance. This fact and \eqref{HC} together with Ascoli-Arzela
give that as $\ep_k \rightarrow 0$ the graphs of the
$\tilde{u}_{k}$ over $\Omega_{1/2}(u_k)$ converge (up to a
subsequence) in the Hausdorff distance to the graph of a H\"older
continuous function $\tilde{u}$ over $B_{1/2} \cap \{x_n \geq
0\}$.

\

\textbf{Step 2 -- Limiting Solution.} We now show that $\tilde u$
solves
\begin{equation}\label{Neumann}
  \begin{cases}
    \Delta \tilde u=0 & \text{in $B_{1/2} \cap \{x_n >0\}$}, \\
\ \\
\tilde u_n=0 & \text{on $B_{1/2} \cap \{x_n =0\}$},
  \end{cases}\end{equation}
in the sense of Definition \ref{defN}.



Let $P(x)$ be a quadratic polynomial touching $\tilde u$ at $\bar
x \in B_{1/2} \cap \{x_n \geq 0\}$ strictly by below. We need to
show that

\

(i) if $\bar x \in B_{1/2} \cap \{x_n >0\}$ then $\Delta P \leq
0;$

\

(ii) if $\bar x \in B_{1/2} \cap \{x_n=0\}$ then $P_n(\bar x) \leq
0.$

\

Since $\tilde u_{k} \rightarrow \tilde{u}$ in the sense specified
above, there exist points $x_k \in \Omega_{1/2}(u_k)$, $x_k
\rightarrow \bar x$, and constants $c_k \rightarrow 0$ such that
\be\label{P+=u} P(x_k)+c_k = \tilde u_{k}(x_k)\ee and
\be\label{uaboveP+} \tilde u_{k} \geq P+c_k \quad \text{in a
neighborhood of $x_k$}. \ee


From the definition of $\tilde u _{k}$, $\eqref{P+=u}$ and
$\eqref{uaboveP+}$ read as $$\label{u=Q} u_k(x_k)= Q(x_k)$$ and
$$\label{u>Q} u_k(x) \geq Q(x) \quad \text{in a neighborhood of
$x_k$}$$ where $$Q(x)= \ep_k(P(x)+c_k) + x_n.$$

We now distinguish the two cases.

\

 (i) If $\bar x \in B_{1/2} \cap
\{x_n
>0\}$ then $x_k \in B_{1/2}^+(u_k)$ (for $k$ large) and hence since $Q$ touches $u_k$ by below at $x_k$ we get
$$\sum_{i,j}a^k_{ij}(x_k) Q_{ij}= \ep_k\sum_{i,j}a^k_{ij}(x_k) P_{ij} \leq f_k(x_k)\le \ep_k^2.$$ Thus, in view of the last inequality in \eqref{fb_omega2}
$$\Delta P = \sum_{i,j}(\delta_{ij}-a^k_{ij}(x_k))P_{ij} + \sum_{i,j}a^k_{ij}(x_k)P_{ij} \leq C\ep_k.$$ Passing to the limit as $k \rightarrow +\infty$
we obtain that $\Delta P \leq 0$ as desired.

\smallskip

(ii) If $\bar x \in B_{1/2} \cap \{x_n=0\}$, as observed in the
Remark following Definition \ref{defN}, we can assume that $\Delta
P
>0.$ We claim that
for $k$ large enough, $x_k \in F(u_k)$. Otherwise $x_{k_n} \in
B_1^+(u_{k_n})$ for a subsequence $k_n \rightarrow \infty$ and as
in the case (i)$$ \Delta P \leq C\ep_{k_n}.$$ Letting $k_n
\rightarrow \infty $ we contradict the fact that $P$ is strictly
subharmonic. Thus $x_k \in F(u_k)$ for $k$ large. Now notice that
$$\nabla Q = \ep_k \nabla P + e_n$$ thus, for $k$ large, $|\nabla
Q|>0$. Since $Q^+$ touches
$u_k$ by below,
$$|\nabla Q|(x_k) \leq g_k(x_k) \leq 1 + \ep_k^2,$$ which gives, $$|\nabla Q|^2(x_k)=\ep_k^2
|\nabla P|^2(x_k) + 1 + 2\ep_k P_n(x_k) \leq 1 +3\ep_k^2,$$ and
thus (after division by $\ep_k$)
$$\ep_k |\nabla P|^2(x_k) - 3\ep_k + 2 P_n(x_k) \leq 0.$$
Passing to the limit as $k \rightarrow +\infty$ we obtain
$$ P_n(\bar x) \leq 0$$ as desired.

\

\textbf{Step 3 -- Improvement of flatness.} From the previous
step, $\tilde u$ solves \eqref{Neumann} and from
\eqref{flat_tilde},
$$-1 \leq \tilde u \leq 1 \quad \text{in $B_{1/2} \cap \{x_n \geq
0\}.$}
$$

From Lemma \ref{smooth} and the bound above we obtain that, for
the given $r$,
$$|\tilde u(x) - \tilde u(0) - \nabla \tilde u(0)\cdot x| \le C_0r^2 \quad \text{in $B_r \cap \{x_n \geq 0\}
$},$$ for a universal constant $C_0$.  In particular, since $0 \in
F(\tilde u)$ and also $\tilde u_n (0) = 0$, we obtain
$$ x' \cdot \tilde{\nu} - C_0 r^2 \leq \tilde{u}(x) \leq x' \cdot
\tilde {\nu}+C_0r^2 \quad \text{in $B_r \cap \{x_n \geq 0\}$},$$
with $\tilde \nu_i= \tilde u_i(0), i=1,\ldots, n-1, |\tilde \nu| \leq
\tilde C$, $\tilde C$ universal constant. Therefore, for $k$ large
enough we get,
$$ x' \cdot \tilde{\nu} - C_1 r^2 \leq \tilde{u}_{k}(x) \leq x'
\cdot \tilde {\nu}+C_1 r^2 \quad \text{in $\Omega_r(u_k)$}.$$ From
the definition of $\tilde{u}_{k}$ the inequality above reads
\be\label{almostflat} \ep_k x' \cdot \tilde{\nu} + x_n - \ep_k C_1
r^2 \leq u_k \leq \ep_k x' \cdot \tilde {\nu}+x_n + \ep_k C_1r^2
\quad \text{in $\Omega_r(u_k)$}.\ee Call
$$\nu= \frac{(\ep_k \tilde \nu, 1)}{\sqrt{\ep_k^2+1}}.
$$ Since, for $k$ large, $$1 \leq \sqrt{\ep_k^2+1} \leq 1 +
\frac{\ep_k^2}{2},$$ we deduce from \eqref{almostflat} that $$ x
\cdot \nu  - \frac{\ep_k^2}{2}r - C_1 r^2\ep_k \leq u_k \leq x
\cdot \nu + \frac{\ep_k^2}{2}r+ C_1r^2 \ep_k \quad \text{in
$\Omega_r(u_k)$}.$$ In particular, if $r_0$ is such that $C_1r_0
\leq 1/4$ and also $k$ is large enough so that $\ep_k \leq 1/2$ we
obtain $$ x \cdot \nu  - \frac{\ep_k}{2}r \leq u_k \leq x \cdot
\nu + \frac{\ep_k}{2}r \quad \text{in $\Omega_r(u_k)$},$$ which
together with \eqref{flat_k} implies that  $$ (x \cdot \nu  -
\frac{\ep_k}{2}r)^+ \leq u_k \leq (x \cdot \nu +
\frac{\ep_k}{2}r)^+ \quad \text{in $B_r$}.$$ Thus the $u_k$
satisfy the conclusion of the lemma, and we reached a
contradiction.
\end{proof}

\section{The proofs of Theorem \ref{flatmain} and Theorem \ref{Lipmain}.}

In this section we finally present the proof of our main theorems.

\

 \textbf{Proof of Theorem
\ref{flatmain}.} Let $u$ be a viscosity solution to \eqref{fb} in
$B_1$, with $0 \in F(u)$, $g(0)=1$ and $a_{ij}(0)= \delta_{ij}$. Consider the sequence of
rescalings
$$u_k(x) := \frac{u(\rho_k x)}{\rho_k}, \quad x\in B_1,$$
with $\rho_k=\bar r^k,$ $k=0,1,\ldots$, for a fixed $\bar r$ such
that
$$\bar r^\beta \leq 1/4, \quad \bar r \leq r_0,$$ with $r_0$ the universal constant in Lemma
\ref{improv}.

Each $u_k$ solves \eqref{fb} in $B_1$ with coefficients $a_{ij}^k(x)= a_{ij}(\rho_k x)$, right hand side $f_k(x):=\rho_kf(\rho_k x),$ and free boundary condition $g_k(x):=g(\rho_k x)$.
For the chosen $\bar r$, by taking $\bar \ep = \ep_0(\bar
r)^2$ the assumption \eqref{fb_omega2} holds for
$\ep=\ep_k :=2^{-k}\ep_0(\bar r)$. Indeed, in $B_1$, in view of \eqref{hypflatmain}, $$|f_k(x)| \leq \|f\|_{L^\infty} \rho_k \leq
\bar \ep \bar r^k\leq \ep_k^2, $$
$$|g_k(x) - 1|=|g(\rho_k x) - g(0)| \leq [g]_{0,\beta} \rho_k^\beta \leq
\bar \ep \bar r^{k\beta}\leq \ep_k^2, $$ and
$$|a^k_{ij}(x) -\delta_{ij}| = |a_{ij}(\rho_k x) - a_{ij}(0)| \leq [a_{ij}]_{0,\beta} \rho_k^\beta \leq
\bar \ep \bar r^{k\beta}\leq \ep_k.$$

The hypothesis \eqref{hyp1} guarantees that for $k=0$ also the
flatness assumption \eqref{flat} in Lemma \ref{improv} is
satisfied by $u_0$. Then, it easily follows by induction on $k$
and Lemma \ref{improv} that each $u_k$ is $\ep_k$-flat in $B_1$ in the sense of
\eqref{flat}. Now, a standard iteration argument gives the desired
statement.\qed

\

\textbf{Proof of Theorem \ref{Lipmain}.} Let $u$ be a viscosity
solution to \eqref{fb}, with $0 \in F(u)$ and $g(0)>1.$ Without loss
of generality, assume $g(0)=1.$ Also, for simplicity we take $a_{ij}(0) = \delta_{ij}.$

Consider the blow-up sequence
$$u_k:= u_{\delta_k}(x) = \frac{u(\delta_k x)}{\delta_k}, $$
with $\delta_k \rightarrow 0$ as $k \rightarrow \infty.$ As in the previous theorem, each $u_k$ solves \eqref{fb} with coefficients $a_{ij}^k(x)= a_{ij}(\delta_k x)$, right hand side $f_k(x):=\delta_kf(\delta_k x),$ and free boundary condition $g_k(x):=g(\delta_k x)$. For $k$ large, the assumption \eqref{hypflatmain} is satisfied for the universal constant $\bar \ep$. In fact, in $B_1$
$$|f_k(x)|= \delta_k|f(\delta_k x)| \leq \delta_k \|f\|_{L^\infty} \leq \bar \ep  $$
$$|g_k(x)-1|= |g_k(x)-g(0)| \leq [g_k]_{0,\beta} = \delta_k^\beta[g]_{0,\beta} \leq \bar \ep, $$
and
$$|a_{ij}^k(x)-\delta_{ij}|=|a_{ij}(\delta_k x)-a_{ij}(0)| \leq [a_{ij}(\delta_k x)]_{0,\beta} = \delta_k^{\beta}[a_{ij}]_{0,\beta} \leq
\bar \ep.$$ Thus, using
non-degeneracy and uniform Lipschitz continuity of the
$u_k$'s (see Appendix for a proof of these properties), standard arguments (see for example \cite{AC}) give that (up
to extracting a subsequence):
\begin{enumerate}
\item $u_k \rightarrow u_0 \ \ \text{in $C^{0,\alpha}_{loc}(\R^n)$}, \ \ \text{for all $0 < \alpha
<1$};$
\item $\p \{u_k >0 \} \rightarrow \p \{u_0 >0 \}$ locally in the Hausdorff
distance;\\
\end{enumerate}
for a globally defined function $u_0: \R^n \longrightarrow \R$.
The blow-up limit $u_0$ is a global solution to the free boundary
problem
\begin{equation}\label{fb_h} \left \{
\begin{array}{ll}
    \Delta u_0 = 0,   & \hbox{in $\{u_0>0\}$,} \\
\ \\
    |\nabla u_0|= 1, & \hbox{on $F(u_0)$,} \\
\end{array}\right.
\end{equation} and since $F(u)$ is a Lipschitz graph in a neighborhood of $0$ we also have from (i)-(ii) that $F(u_0)$ is
Lipschitz continuous. Thus, it follows from \cite{C1} that $u_0$
is a so-called one-plane solution, i.e. (up to rotations) $u_0 =
x_n^+.$ Combining the facts above, one concludes that for all $k$
large enough, $u_k$ is $\bar \ep$-flat say in $B_1 $ i.e. $$\label{hyp1new}(x_n-\bar \ep)^+ \leq u_k(x) \leq
(x_n+\bar \ep)^+, \quad x \in B_1.$$
Thus $u_k$ satisfies the assumptions of Theorem \ref{flatmain},
and our conclusion follows. \qed

\section{Appendix}

We sketch here the proof of a standard result that is Lipschitz continuity and non-degeneracy of a solution $u$ to
\begin{equation}\label{fb4} \left \{
\begin{array}{ll}
    \sum_{i,j} a_{ij}(x) u_{ij} = f,   & \hbox{in $\Omega^+(u):= \{x \in \Omega : u(x)>0\}$,} \\
\ \\
    |\nabla u|= g, & \hbox{on $F(u):= \partial \Omega^+(u) \cap \Omega,$} \\
\end{array}\right.
\end{equation} under the assumption ($0<\ep<1$)
\begin{equation}\label{fb5}
    \|f\|_{L^\infty(\Omega)} \leq \ep^2,  \quad
    \|g(x) - 1\|_{L^\infty(\Omega)} \leq  \ep^2, \quad \|a_{ij}-\delta_{ij}\|_{L^\infty(\Omega)} \leq \ep.\end{equation}

\begin{lem}Let $u$ be a solution to \eqref{fb4}-\eqref{fb5} with $\ep \leq \tilde \ep$ a universal constant. If $F(u) \cap B_1 \neq \emptyset$ and $F(u)$ is a Lipschitz graph in
$B_2$, then u is Lipschitz and non-degenerate in $B_1^+(u) $ i.e.
\begin{equation}\label{nondegeq}
c_0 d(z) \leq u(z) \leq C_0 d(z) \quad \textrm{for all $z \in B_1^+(u) $,}
\end{equation}
with $d(z) =\textrm{dist}(z, F(u))$ and $c_0,C_0$ universal constants.
\end{lem}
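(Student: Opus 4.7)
The plan is to prove the two inequalities in \eqref{nondegeq} separately by barrier/comparison arguments that exploit the Lipschitz character of $F(u)$, with the perturbations of $O(\ep)$ in the coefficients and of $O(\ep^2)$ in the right hand side absorbed by small universal constants.

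For the upper bound $u \leq C_0 d$ (the Lipschitz estimate), I would first observe that the Lipschitz graph hypothesis on $F(u)$ gives the uniform exterior cone condition: at every $x_0 \in F(u) \cap B_{3/2}$ there is a truncated cone $\mathcal C^-_{x_0}\subset \{u=0\}$ with aperture depending only on the Lipschitz constant $L$. Since $\|f\|_\infty\leq \ep^2$ and $u$ vanishes continuously on the Lipschitz portion $F(u)\cap B_2$, extending by $0$ and applying standard interior $L^\infty$ estimates for the uniformly elliptic operator $\sum a_{ij}\partial_{ij}$ gives $\sup_{B_{3/2}^+(u)} u \leq M$ for a universal $M$. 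Given $z\in B_1^+(u)$ and its projection $x_0\in F(u)$, I would then build a classical radial supersolution $\Psi$ (of the form $A-B|x-y|^{-\gamma}$ with $\gamma$ large and the pole $y$ inside $\mathcal C^-_{x_0}$) in the region $B_R(x_0)\setminus \mathcal C^-_{x_0}$, satisfying $\Psi=0$ on $\partial \mathcal C^-_{x_0}\cap B_R$, $\Psi\geq M$ on $\partial B_R\cap\{u>0\}$, $\sum a_{ij}\Psi_{ij}\leq -\ep^2$ (achievable for $\gamma$ universal once $\ep$ is small), and $\Psi(x)\leq C_0\,\mathrm{dist}(x,\partial \mathcal C^-_{x_0})$ near $x_0$. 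The comparison principle for the operator $L_a := \sum a_{ij}\partial_{ij}$ yields $u\leq \Psi$, hence $u(z)\leq C_0 d(z)$.

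For the lower bound $u\geq c_0 d$ (non-degeneracy), I would argue by contradiction. Suppose $u(z_0)<\delta\, d(z_0)$ for some $z_0\in B_1^+(u)$ and $\delta>0$ to be chosen small. Setting $d=d(z_0)$ and letting $x_0$ be a projection of $z_0$ onto $F(u)$, rescale $v(y):=u(x_0+dy)/d$, so $v$ is a solution of a problem of the same form \eqref{fb4}--\eqref{fb5} in $B_1$, $F(v)$ is Lipschitz with the same constant $L$, and $v(y_0)<\delta$ at $y_0=(z_0-x_0)/d\in \partial B_1$. The Lipschitz property of $F(v)$ ensures the Harnack chain condition in $\Omega^+(v)$, so a chain of finitely many balls (depending only on $L$) contained in $\Omega^+(v)$ connects $y_0$ to a nondegenerate point $y_1$ with $B_{\rho_0}(y_1)\subset \Omega^+(v)$ and $\overline{B_{\rho_0}(y_1)}\cap F(v)\neq\emptyset$, for $\rho_0$ universal. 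Iterating the interior Harnack inequality (with right hand side $\leq \ep^2$) along this chain gives $v\leq C(L)\delta$ on $\overline{B_{\rho_0}(y_1)}$. Now I would place a tangent ball $B_{\rho}(y_2)\subset\Omega^+(v)$ touching $F(v)$ at a point $\bar y$, and build a strict comparison subsolution $\phi$ of the Alt--Caffarelli type, e.g.\ $\phi(y)=\mu\bigl(|y-y_2|^{-\gamma}-\rho^{-\gamma}\bigr)_-$ extended by $0$ outside, with $\mu$ tuned so that $|\nabla\phi|(\bar y)>1+\ep^2\geq g$ and $L_a\phi>\ep^2\geq f$. Taking $\mu$ proportional to $1$ (universal), one has $\phi(y_2)\geq c>0$, contradicting $v\leq C(L)\delta$ on $B_{\rho_0}(y_1)$ once $\delta$ is small enough. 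This yields the lower bound with a universal $c_0$.

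The main obstacle is the non-degeneracy half: the comparison subsolution must be strict in both the interior equation \emph{and} the gradient condition simultaneously, so the radial barrier parameters must be chosen with care to exploit the smallness of $\ep$ (and absorb the $O(\ep^2)$ perturbations $f$ and $g-1$). The Harnack chain step is geometrically standard in Lipschitz domains, and the upper bound requires only the familiar exterior cone barrier, so the crux is arranging the tangent ball construction robustly in the perturbed elliptic setting.
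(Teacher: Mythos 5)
Your plan misses the essential role of the free boundary condition $|\nabla u|=g$ in both halves of the estimate, and the gaps this creates are real.

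For the upper bound, you claim that ``extending by $0$ and applying standard interior $L^\infty$ estimates for the uniformly elliptic operator $\sum a_{ij}\partial_{ij}$ gives $\sup_{B_{3/2}^+(u)} u \leq M$ for a universal $M$.'' This is not true: interior $L^\infty$ estimates bound $\sup u$ in terms of $\|u\|_{L^p}$ on a larger ball or of boundary data, neither of which is available. Indeed the function $u = A x_n^+$ satisfies the interior equation (with $f\equiv 0$), vanishes on the Lipschitz graph $\{x_n=0\}$, and has an exterior cone in $\{u=0\}$, yet $\sup u$ is unbounded and $u(z)=A\,d(z)$ with $A$ arbitrary. The only thing ruling this out is the free boundary condition $|\nabla u|=g\approx 1$, which your argument never invokes for the upper bound. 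The paper closes this gap by a contradiction argument with a subsolution barrier: if $\tilde u(0)>C_0$, Harnack pushes this largeness to $\overline B_{1/2}$, the subharmonic radial barrier $v = c\tilde u(0)G$ sits below $\tilde u$, and at the nearest free boundary point the viscosity condition forces $|\nabla v|\le g\le 2$, which is incompatible with $C_0$ large.

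For the lower bound, your comparison step is circular. To apply the comparison Lemma~\ref{comparison} (or a sliding variant) with the annular subsolution $\phi$ you describe, you must verify $v\ge \phi^+$ on the boundary of the comparison region, in particular $v\ge \mu$ on $\partial B_{\rho/2}(y_2)$ — but a lower bound on $v$ there is precisely what you are trying to prove, and your hypothesis for contradiction is that $v$ is small. A sliding version (translating $\phi-t$ upward) also fails to give a useful bound: the first touching point has $v=\mu - t^*$, and nothing prevents $t^*$ from being close to $\mu$. The paper avoids this by sliding a strict \emph{supersolution} $\tilde G=\eta(1-G)$ with $|\nabla\tilde G|<1-\ep^2$ on its free boundary; the touching can then only occur on the outer level set $\{\tilde G=\eta\}$, pinning down a point $\tilde z$ with $\tilde u(\tilde z)=\eta$, and the already-established Lipschitz upper bound forces $d(\tilde z,F(u))\gtrsim \eta$. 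A Harnack chain in the Lipschitz domain then gives $\tilde u(0)\ge c\eta$. Your Harnack chain idea and the annular barrier are the right ingredients, but the logic has to run through a supersolution slide plus the Lipschitz estimate, not through a subsolution comparison that presupposes the lower bound.
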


\begin{proof} Assume without loss of generality that $0 \in B_1^+(u)$ and call $d:=d(0)$.

Consider the rescaled function $$\tilde{u}(x)= \frac{u(dx)}{d}, \quad x \in B_1.$$ Clearly $\tilde u$ still satisfies \eqref{fb} in $B_1$ with coefficients $\tilde{a}_{ij}(x)=a_{ij}(dx),$ right hand side $\tilde f (x)= d f(dx)$ and free boundary condition $\tilde g (x)= g(dx)$. Since $d \leq 1$, the assumption \eqref{fb_omega2} holds. We wish to show that $$c_0\leq \tilde u (0) \leq C_0.$$
Assume by contradiction that $\tilde u(0) > C_0$, with $C_0$ to be made precise later.

To construct a subsolution, we use the same function as in Lemma \ref{main}. Precisely, let $$G(x)= C(|x|^{-\gamma}-1)$$ be defined on the closure of the annulus $B_1\setminus  \overline{B}_{1/2}$. In view of the uniform ellipticity of the coefficients, we can choose $\gamma$ large universal so that (for $\ep$ small)
$$\sum_{ij}\tilde a_{ij}G_{ij}>\ep^2 \quad \textrm{on  $B_1\setminus  \overline{B}_{1/2}$}$$  and we can choose $C$ so that $$G= 1 \quad \textrm{on $\partial B_{1/2}.$}$$
By Harnack inequality (see \eqref{HInew}), using the contradiction hypothesis we get (for $\ep$ small) $$\tilde u \geq c \tilde u(0) \quad \textrm{on $\overline{B}_{1/2}$.}$$ Thus, by the maximum principle $$\tilde u(x) \geq v(x) = c \tilde u(0) G(x) \quad \textrm{on $\overline{B}_1 \setminus B_{1/2}$.}$$  Hence at the point $z$ where $d(0)$ is achieved we have $$|\nabla v|(z) \leq g(z) \leq 1+\ep^2 \leq 2$$ which contradicts $\tilde u(0) > C_0$ if $C_0$ is large enough.

To prove the lower bound, let $$\tilde G(x) = \eta (1-G(x))$$ with $\eta$ (depending on $\gamma$) such that $$|\nabla \tilde G| < 1-\ep^2 \quad \textrm{on $\p B_{1/2}.$}$$

Assume without loss of generality that $F(u)$ is a Lipschitz graph in the $x_n$ direction with Lipschitz constant equal to 1. We translate the graph of $\tilde{G}$ by $-4e_n$. Notice that it is above the graph of $\tilde u$  since $\tilde u \equiv 0$ in $B_1(-4e_n).$ We slide the graph of $\tilde G$ in the $e_n$ direction till we touch the graph of $\tilde u$. Since $\tilde G$ is a strict supersolution to our free boundary problem, the touching point $\tilde z$ can occur only on the $\eta $ level set with $\tilde d := d(\tilde z, F(u)) \leq 1.$
From the first part, $\tilde u$ is  Lipschitz continuous and hence $\tilde u (\tilde z) = \eta \leq C \tilde d$. Thus $$C^{-1}\eta \leq \tilde d\leq 1$$ that is $\tilde d$ is comparable to 1. Since $F(u)$ is Lipschitz we can connect $0$ and $\tilde z$ with a chain of intersecting balls included in the positive side of $\tilde u$ with radii comparable to 1.  The number of balls is bounded by a universal constant . Then we can apply Harnack inequality and obtain (for $\ep$ small) $$\tilde u(0) \geq c\tilde u(\tilde z)= c_0,$$ as desired.\end{proof}

\end{document}